\newcommand{\ap}{\alpha}
\newcommand{\ga}{\gamma}
\newcommand{\de}{\delta}
\newcommand{\gb}{\beta}
\newcommand{\G}{\Gamma}
\newcommand{\gl}{\lambda}
\newcommand{\ve}{\varepsilon}
\newcommand{\dca}{D_c^\alpha}
\newcommand{\beq}{\begin{equation}}
\newcommand{\eeq}{\end{equation}}
\newcommand{\bea}{\begin{align}}
\newcommand{\eea}{\end{align}}
\newcommand{\bthm}{\begin{theorem}}
\newcommand{\ethm}{\end{theorem}}
\newcommand{\bpr}{\begin{proof}}
\newcommand{\epr}{\end{proof}}
\newcommand{\bcl}{\begin{corollary}}
\newcommand{\ecl}{\end{corollary}}
\newcommand{\bpn}{\begin{proposition}}
\newcommand{\epn}{\end{proposition}}
\newcommand{\bre}{\begin{remark}}
\newcommand{\ere}{\end{remark}}
\newcommand{\bdf}{\begin{definition}}
\newcommand{\edf}{\end{definition}}
\newcommand{\bss}{\begin{align*}}
\newcommand{\ess}{\end{align*}}
\newcommand{\bl}{\label}
\newtheorem{theorem}{Theorem}[section]
\newtheorem{corollary}[theorem]{Corollary}
\newtheorem{lemma}[theorem]{Lemma}
\newtheorem{proposition}[theorem]{Proposition}
\theoremstyle{definition}
\newtheorem{definition}[theorem]{Definition}
\theoremstyle{remark}
\newtheorem{remark}{Remark}
\numberwithin{equation}{section}
\begin{document}

\title[Synchronization of Time-Fractional Hopfield Neural Networks]{Robust Synchronization of Time-Fractional Memristive Hopfield Neural Networks}

\author[Y. You]{Yuncheng You}
\address{University of South Florida, Tampa, FL 33620, USA}
\email{yygwmp@gmail.com}

\thanks{}

%    General info
\subjclass[2020]{26A33, 34A08, 45J05, 68T07, 92B20}

\date{\today}

%\dedicatory{This paper is dedicated to }

\keywords{Fractional Hopfield neural network, robust synchronization, memristive coupling, Hebbian weight dynamics.}

\begin{abstract} 
In this paper we study robust synchronization of time-fractional Hopfield neural networks with memristive couplings and Hebbian learning rules. This new model of artificial neural networks exhibits strong memory and long-range path-dependence in learning processes. Through scaled group estimates it is proved that under rather general assumptions the solution dynamics is globally dissipative. The main result established a threshold condition for achieving robust synchronization of the neural networks if it is satisfied by the interneuron coupling strength coefficient. The synchronizing threshold is explicitly computable in terms of the original parameters and strictly decreasing for the fractional order $\ap \in (0, 1)$. 
\end{abstract}

\maketitle
 
\section{\textbf{Introduction}}

Dynamical analysis of artificial neural networks plays an essential role in deep learning and all-around AI applications. In recent two decades many biological and artificial neural network models of differential equations have been proposed and studied on topics of stability and pattern formation mainly by approaches of matrix spectrum estimation and computational simulations. As a type of recurrent neural networks, Hopfield neural networks \cite{Hp, HJ, OC} with memristive synapses \cite{Chua, V} are extensively used for the preeminent performances in many frontier research applications \cite{Ml, Li, E, D}. 

For biological neural network models in terms of diffusive Hindmarsh-Rose equations \cite{Y2}, diffusive FitzHugh-Nagumo equations \cite{YTT}, and general reaction-diffusion equations \cite{YT} with memristive synapses, the exponential synchronization results have been proved by the author's group. For the memristive Hopfield neural networks, very recently it was shown \cite{Y} that only approximate synchronization can be achieved instead of complete synchronization, due to mismatched weight dynamics and heterogeneous activation functions among the neurons.

In recent two decades, along with fractional-order derivatives and related calculus, researches on time-fractional differential equations have been booming in modeling dynamics of physical and interdisciplinary processes as well as Hopfield neural networks \cite{BM, KS}, ranging from viscoelastic liquids \cite{BC}, electrochemical processes \cite{HL}, dielectric polarization \cite{RM}, image processing and encryption \cite{JV}, mathematical finance \cite{SM, AE}, to machine learning \cite{RT}. The fractional differential equation models have the advantage of continuum long-term memory properties in comparison with classical integer-order differential equations with discrete time-delay terms. 

From the biosimulation point of view, fractional-order formulation of artificial neural networks can be analogously justified by some reported researches on biological neural systems, such as the oculomotor integrator \cite{TA}, which is a neural network in the brainstem that quickly converts a temporary eye velocity signal to a persistent eye position command of fractional order less than one. Another evidence is that fractional-order differential model can be used to efficiently emulate stimulus anticipation and oscillatory neuron firing dynamics \cite{BL}, which is a cognitive preparatory measure triggered by the emotional potentials called SPN (stimulus preceding negativity). In the sense fractional model dynamics is an important aspect for the frontier BMI (brain-machine interface) technology. 

Synchronization dynamics of fractional-order memristive Hopfield neural networks is an important topic, which is more extensive and complicated than the classical stability properties converging limited to equilibrium states \cite{PZ, WL}, because it also includes the chaotic synchronization and the approximate synchronization.

In this paper, we propose a new model of Caputo type time-fractional memristive Hopfield neural networks with Hebbian learning rule. It can also be called fractional memristive Hopfield-Hebbian neural networks (briefly fmHHNN). We shall investigate robust synchronization of the solution trajectories for his model.   

Consider a fractional-order memristive Hopfield neural network composed of $m$ neuron nodes and denoted by $\mathcal{NW} = \{\mathcal{N}_i : i = 1, 2, \cdots, m\}$. The state variable $u_i(t)$ for each neuron $\mathcal{N}_i, 1 \leq i \leq m$, the symmetric weights $w_{ij}(t)$ updated by the Hebbian learning rule \cite{H, VB} before synaptic activation functions $f_j(u_j)$, and the memristor's window function $\rho(t)$ are governed by fractional differential equations of the same fractional order $\ap$ in this model:
\beq \bl{Meq}
	\begin{split}
	&D_c^\ap u_i (t) = - a_i u_i + \sum_{j =1}^m w_{ij} \, f_j (u_j) + k\, \psi_i (\rho) u_i + J_i - P \sum_{j=1}^m (u_i - u_j), \;\; 1 \leq i \leq m,  \\
	&D_c^\ap w_{ij} (t) = - c_{ij}\, w_{ij} + \gl_{ij} \, f_i (u_i) f_j (u_j), \quad  1 \leq i, j \leq m,   \\
	&D_c^\ap \rho(t) = \sum_{i=1}^m \ga_i u_i - b \rho,  \quad  \text{where} \quad  t > 0, \quad \ap \in (0, 1).
	\end{split} 
\eeq
In this model, the Caputo type fractional time derivative of order $\ap \in (0, 1)$ denoted by $\dca$ of a scalar function $y(t)$ is defined by 
$$
	\dca y(t) = \frac{1}{\G (1 - \ap)} \int_0^t (t - s)^{-\ap}\, y^\prime (s)\, ds, \quad  t > 0,
$$
and $\G(z) = \int_0^\infty t^{z-1} e^{-t}\, dt, \,z > 0,$ is the Gamma function, for $y(\cdot) \in C[0, \infty)$ and $y^\prime (\cdot) \in L^1_{loc} [0, \infty)$. The coefficient $P > 0$ in \eqref{Meq} is called the interneuron coupling strength. Note that the state variable $u_i(t)$ simulates the electric potential of a biological neuron. The initial states of the system \eqref{Meq} will be denoted by 
\beq \bl{inc}
	 u_i^0 = u_i(0),  \quad w_{ij}^0 = w_{ij}(0),  \quad \rho^0 = \rho (0), \quad 1 \leq i, j \leq m.
\eeq
Here $w^0_{ij} = w^0_{ji}$ for any $i, j$. The memristive feature of this model is reflected by the memductance-potential synapse term $k \psi_i (\rho) u_i$ for each neuron $\mathcal{N}_i$. The Hebbian learning rule used in \eqref{Meq} for synaptic weights  $w_{ij} (t)$ demonstrated good performance in unsupervised machine learning tasks on certain datasets like MNIST datasets and CIFAR datasets in image recognition and classification \cite{VFR}. 

We assume that the scalar functions $f_i (s), 1 \leq i \leq m$ are locally Lipschitz continuous functions and the memristor window's functions are $\psi_i (s) = s(\eta_i - s), 1 \leq i \leq m$. The parameters $a_i, b, k, \eta_i, c_{ij}$ can be any given positive constants, while the input/bias constants $J_i$ and coefficients $\gl_{ij}, \ga_i$ can be any given real numbers. More specific assumptions on parameters and element functions will be made in Section 3.

In this paper we shall define robust synchronization for artificial neural network models and investigate this new model \eqref{Meq} of fractional-order memristive Hopfield neural networks to rigorously prove that robust synchronization of this fmHHNN will be triggered if the network coupling strength coefficient $P$ satisfies a computable threshold condition. 

The rest of the paper is organized as follows. In Section 2 we recall preliminary basics of fractional calculus and time-fractional differential equations. In particular we shall prove three useful tool Lemmas, especially a new form of fractional Gronwall inequality. A definition of robust synchronization will then be introduced. In Section 3 we show the dissipative dynamics of global solutions to the initial value problem \eqref{Meq}-\eqref{inc} in terms of the existence of an absorbing set. In Section 4 we shall prove the main result on robust synchronization of the fractional memristive Hopfield-Hebbian neural networks by the approach of direct uniform estimates of the interneuron differencing equations. Conclusions will be in the end.

\section{\textbf{Fractional Analysis Lemmas}}

Fractional-order derivatives and fractional calculus broadly expanded the mathematical fields in modeling physical processes and phenomenological dynamics by fractional differential equations (FDE). The framework of FDE provides a flexible incorporation of longterm memory of continuum past states in the trajectories of real time evolution. The basic theory and some applications of fractional calculus and FDE can be found in books \cite{KST, KD, BJ}.

Caputo fractional derivative $\dca y(t)$ defined in Section 1 is strictly called the left-sided Djrbashian-Caputo fractional derivative \cite{BJ, Ca}, usually for absolutely continuous functions $y \in AC[0, T)$ which annihilated the singularity at the initial point $t = 0$. Compare with Riemann-Liouville fractional derivative and Gr\"{u}nwald-Letnikov fractional derivative, Caputo fractional derivative can easily handle the initial conditions in a way that matches the physically observable ordinary derivatives at $t = 0$, making it most suitable and popular for modeling initial value problems of evolutionary equations in real world applications. But the product rule and the chain rule for the ordinary derivatives are not valid for Caputo fractional derivatives in analysis.

First we present a useful Lemma \cite{AA} with a brief proof, which will be used in conducting \emph{a priori} estimates of solutions for FDE. 
\begin{lemma} \bl{L1}
       For $\ap \in (0, 1)$ and any given  $T > 0$, if $f \in AC [0, T]$, then it holds that
\beq \bl{prl}
	 \frac{1}{2}\, \dca f^2(t) \leq f(t) \dca f(t), \quad \text{for} \;\; t \in (0,T).
\eeq
\end{lemma}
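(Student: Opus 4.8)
The plan is to unwind the definition of the Caputo derivative and reduce the claimed inequality to the nonnegativity of a single singular integral. First I would write both sides over the common kernel $(t-s)^{-\ap}$. Since the ordinary chain rule $(f^2)' = 2 f f'$ is valid for the integrand (not for $\dca$ itself, but under the integral sign it is just a pointwise identity for $f \in AC$), one has
\[
	\tfrac{1}{2}\, \dca f^2(t) = \frac{1}{\G(1-\ap)} \int_0^t (t-s)^{-\ap}\, f(s)\, f'(s)\, ds,
\]
while in $f(t)\,\dca f(t)$ the constant $f(t)$ is carried inside the integral. Subtracting yields the key identity
\[
	f(t)\, \dca f(t) - \tfrac{1}{2}\, \dca f^2(t) = \frac{1}{\G(1-\ap)} \int_0^t (t-s)^{-\ap}\, [f(t) - f(s)]\, f'(s)\, ds,
\]
so the entire lemma is equivalent to proving that this last integral is nonnegative.

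Next I would introduce $F(s) := f(s) - f(t)$, which is absolutely continuous in $s$ with $F(t) = 0$ and $F' = f'$. Then $[f(t)-f(s)]f'(s) = -F(s)F'(s) = -\tfrac{1}{2}(F^2)'(s)$, so the integral becomes $-\tfrac{1}{2}\int_0^t (t-s)^{-\ap}(F^2)'(s)\, ds$. Integrating by parts, using $\frac{d}{ds}(t-s)^{-\ap} = \ap (t-s)^{-\ap-1}$, I expect to arrive at
\[
	f(t)\, \dca f(t) - \tfrac{1}{2}\, \dca f^2(t) = \frac{1}{\G(1-\ap)} \left[ \tfrac{1}{2}\, t^{-\ap} F^2(0) + \tfrac{\ap}{2} \int_0^t (t-s)^{-\ap-1} F^2(s)\, ds \right],
\]
whose right-hand side is manifestly nonnegative because $t^{-\ap} > 0$, $\ap > 0$, the kernel $(t-s)^{-\ap-1} > 0$, and $F^2 \geq 0$. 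This establishes \eqref{prl}.

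The hard part will be the rigorous justification of this integration by parts, since the kernel $(t-s)^{-\ap-1}$ is nonintegrable near $s = t$ and the endpoint contribution $(t-s)^{-\ap} F^2(s)\big|_{s=t}$ has the indeterminate form $\infty \cdot 0$. I would handle this by truncating the integral to $[0, t-\ve]$, where the integration by parts is elementary, and then letting $\ve \to 0^+$. The monotone convergence theorem controls the positive bulk integral, and the only genuinely delicate point is showing that the boundary term $\ve^{-\ap} F^2(t-\ve) \to 0$. Here the absolute continuity of $f$ (together with the assumed existence of $\dca f(t)$ and $\dca f^2(t)$, which guarantees the integrals converge absolutely) enters: finiteness of the bulk integral forces $F^2(s) = o((t-s)^{\ap})$ along a sequence $s \to t$, and since the algebra shows the boundary limit exists, it must equal $0$. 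Once the boundary term is disposed of, the displayed identity and the conclusion follow immediately.
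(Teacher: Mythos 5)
Your reduction of the inequality to the nonnegativity of $\int_0^t (t-s)^{-\ap}\,[f(t)-f(s)]\,f'(s)\,ds$ coincides with the paper's first step, but from there the two arguments genuinely diverge. The paper writes $f(t)-f(s)=\int_s^t f'(x)\,dx$, interchanges the order of integration, and recognizes the result as $\tfrac12\int_0^t (t-x)^{\ap}\,\tfrac{d}{dx}\left(\int_0^x(t-s)^{-\ap}f'(s)\,ds\right)^2 dx$; the integration by parts there is painless because the weight $(t-x)^{\ap}$ vanishes at $x=t$ while the inner integral stays bounded, so one lands directly on $\tfrac{\ap}{2}\int_0^t(t-x)^{\ap-1}\left(\int_0^x(t-s)^{-\ap}f'(s)\,ds\right)^2 dx\ge 0$ with an integrable kernel and no boundary contribution. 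You instead integrate by parts against the kernel itself, using $F(s)=f(s)-f(t)$, which produces the equally transparent quadratic expression $\tfrac12\,t^{-\ap}F^2(0)+\tfrac{\ap}{2}\int_0^t(t-s)^{-\ap-1}F^2(s)\,ds$, but at the price of a hypersingular kernel and the $\infty\cdot 0$ endpoint you correctly flag. Both are legitimate proofs of this Alikhanov-type lemma; the paper's route trades your delicate limit for one application of Fubini.

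One step in your closing argument is circular as written: you kill the boundary term $\ve^{-\ap}F^2(t-\ve)$ by appealing to finiteness of the bulk integral $\int_0^t(t-s)^{-\ap-1}F^2(s)\,ds$, but the truncated identity only shows that the \emph{difference} of the bulk and boundary terms converges; monotone convergence by itself does not exclude the possibility that both tend to $+\infty$ together, so finiteness of the bulk cannot be extracted from the identity before the boundary term is handled. The repair is a direct estimate that bypasses the identity: assuming, as you do, that the Caputo integral converges absolutely at $t$, one has
\[
|F(t-\ve)|\le\int_{t-\ve}^{t}|f'(s)|\,ds\le \ve^{\ap}\int_{t-\ve}^{t}(t-s)^{-\ap}|f'(s)|\,ds = \ve^{\ap}\cdot o(1)\quad (\ve\to 0^+),
\]
by absolute continuity of the integral, whence $\ve^{-\ap}F^2(t-\ve)=o(\ve^{\ap})\to 0$. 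Finiteness and nonnegativity of the bulk integral then follow \emph{from} the identity rather than being an input to it. With that one-line adjustment your proof is complete and correct.
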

\begin{proof}
By change order of integration and then integration by parts, we have
\begin{equation*}
	\begin{split}
	&\G(1 - \ap) \left[f(t) \dca f(t) - \frac{1}{2}\, \dca f^2(t) \right]  \\
	= &\, f(t) \int_0^t (t - s)^{-\ap} f^\prime (s)\, ds - \int_0^t (t - s)^{-\ap} f(s) f^\prime(s)\,ds  \\
	= &\,\int_0^t (t - s)^{-\ap} f^\prime (s)\, [ f(t)- f(s) ]\, ds = \int_0^t (t - s)^\ap f^\prime (s) \int_s^t f^\prime (x)\, dx\,ds  \\
	\overset{\text{CoI}}{=} &\, \int_0^t f^\prime (x) \int_0^x (t - s)^\ap f^\prime (s)\, ds\,dx = \int_0^t (t - x)^\ap \frac{f^\prime (x)}{(t - x)^\ap} \int_0^x (t - s)^\ap f^\prime (s)\, ds\,dx  \\
	= &\, \frac{1}{2} \int_0^t (t - x)^\ap \frac{d}{dx} \left(\int_0^x (t - s)^{-\ap} f^\prime(s)\, ds \right)^2 dx  \\
	\overset{\text{IbP}}{=} &\, \frac{\ap}{2} \int_0^t (t - x)^{\ap -1} \left(\int_0^x (t - s)^{-\ap} f^\prime(s)\, ds \right)^2 dx \geq 0, \quad  t \in (0, T).
	\end{split}
\end{equation*}
Since $\G(1 - \ap) > 0$ for $\ap \in (0, 1)$, the inequality \eqref{prl} is proved.
\end{proof}

The Mittag-Leffler function $E_\ap (z)$ and the two-parameter Mittag-Leffler function $E_{\ap_1, \ap_2} (z)$ \cite{ML, GK} defined by  
$$
	E_\ap (z) = \sum_{n = 0}^\infty \frac{z^n}{\G(n \ap + 1)},  \quad  E_{\ap_1, \ap_2} (z) = \sum_{n = 0}^\infty \frac{z^n}{\G(n \ap_1 + \ap_2)},  \quad z \in \mathbb{C},
$$
have an essential role to play in fractional differential equations and analysis. $E_\ap (z)$ is an entire function and $E_1(z) = e^z$. The function $E_\ap (-x)$ with real variable $x > 0$ is especially more involved in treatment of FDE. It is completely monotone shown in \cite[Corollary 3.2]{BJ} meaning the derivatives $(-1)^n E_\ap^{(n)} (-x) \geq 0$ for $x > 0$ and all positive integer orders $n$. 

The following Lemma provides a very useful tool in study of dynamics with respect to global solutions of fractional-order differential equations. 

\begin{lemma} \textup{(Fractional Gronwall Inequality)}  \bl{L2}
      If a nonnegative function $x(t)$ is absolutely continuous for $t \geq 0$ and satisfies the inequality
\beq \bl{pq}
	\dca x(t) \leq p - q\, x(t), \quad  \text{for} \;\;  t > 0,
\eeq
where the fractional order is $\ap \in (0, 1)$, $p$ and $q$ are positive constants, then it holds that
\beq \bl{fG}
	\begin{split}
	x(t) &\,\leq x(0)\, E_\ap (-q\, t^\ap) + \, \frac{p}{q}\,\G(\ap) \left(1 - E_\ap (- q\, t^\ap) \right)      \\
	&\, < x(0) \left( \frac{\G(1 + \ap)}{\G(1 + \ap) + q t^\ap} \right) + \, \frac{p}{q}\,\G(\ap) ,  \quad \text{for} \; \; t > 0.
	\end{split}
\eeq
\end{lemma}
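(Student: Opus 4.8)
The plan is to dominate $x(t)$ pointwise by the solution of the associated linear Caputo equation taken with equality, to write that solution explicitly in terms of Mittag-Leffler functions, and only then to relax it to the two bounds asserted in \eqref{fG}. Concretely, let $z(t)$ solve the linear problem $\dca z(t) = p - q\,z(t)$ with $z(0)=x(0)$. I would first establish the comparison $x(t)\le z(t)$ for $t>0$, then compute $z$ in closed form, and finally weaken $z$ to the stated right-hand sides using known monotonicity estimates for $E_\ap(-\cdot)$.

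For the comparison, introduce the slack $g(t):=p-q\,x(t)-\dca x(t)$, which is nonnegative by hypothesis \eqref{pq}, so that $x$ solves \emph{exactly} $\dca x(t)=-q\,x(t)+\big(p-g(t)\big)$ with the given $x(0)$. Invoking the variation-of-constants representation for linear Caputo equations of order $\ap\in(0,1)$,
\[
x(t)=x(0)\,E_\ap(-q t^\ap)+\int_0^t (t-s)^{\ap-1} E_{\ap,\ap}\!\big(-q(t-s)^\ap\big)\,\big(p-g(s)\big)\,ds .
\]
The decisive point is that the kernel $(t-s)^{\ap-1}E_{\ap,\ap}(-q(t-s)^\ap)$ is nonnegative: this is the companion of the complete monotonicity of $E_\ap(-x)$ quoted above, since $r\mapsto r^{\ap-1}E_{\ap,\ap}(-q r^\ap)$ is itself completely monotone for $\ap\in(0,1)$. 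Because both the kernel and $g$ are nonnegative, the integral $\int_0^t (t-s)^{\ap-1}E_{\ap,\ap}(-q(t-s)^\ap)\,g(s)\,ds$ is nonnegative and enters with a minus sign; dropping it yields $x(t)\le z(t)$, where $z(t)$ is the same formula with $g\equiv 0$.

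To make $z$ explicit I would use the identity $\tfrac{d}{dt}E_\ap(-q t^\ap)=-q\,t^{\ap-1}E_{\ap,\ap}(-q t^\ap)$, read off termwise from the defining series, which upon integration gives $\int_0^t (t-s)^{\ap-1}E_{\ap,\ap}(-q(t-s)^\ap)\,ds=\tfrac1q\big(1-E_\ap(-q t^\ap)\big)$. Hence $z(t)=x(0)\,E_\ap(-q t^\ap)+\tfrac{p}{q}\big(1-E_\ap(-q t^\ap)\big)$, which is already the first bound of \eqref{fG} but with the sharper constant $1$ in place of $\G(\ap)$. Since $\G(\ap)>1$ for every $\ap\in(0,1)$ (on this interval $\G$ decreases from $+\infty$ to $\G(1)=1$) and $1-E_\ap(-q t^\ap)\ge 0$, enlarging $1$ to $\G(\ap)$ only weakens the inequality and so recovers the stated first line of \eqref{fG}.

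For the strict second line I would estimate the two summands separately. The known global bound $E_\ap(-x)\le \G(1+\ap)/(\G(1+\ap)+x)$ for $x\ge0$, applied with $x=q t^\ap$ and multiplied by $x(0)\ge0$, controls the first summand; for the second, strict positivity $E_\ap(-q t^\ap)>0$ (again from complete monotonicity) forces $1-E_\ap(-q t^\ap)<1$, whence $\tfrac{p}{q}\G(\ap)\big(1-E_\ap(-q t^\ap)\big)<\tfrac{p}{q}\G(\ap)$. Summing the two estimates produces the strict inequality. I expect the main obstacle to be the rigorous justification of the comparison step for a merely absolutely continuous $x$, namely the validity of the variation-of-constants formula together with the nonnegativity of the $E_{\ap,\ap}$-kernel; once that is secured, everything downstream is bookkeeping with the standard monotonicity bounds for $E_\ap(-x)$.
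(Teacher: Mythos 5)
Your proof is correct and follows essentially the same route as the paper's: the variation-of-constants representation for the linear Caputo equation, dropping the nonnegative slack term via positivity (complete monotonicity) of the kernel $(t-s)^{\ap-1}E_{\ap,\ap}(-q(t-s)^\ap)$, and then the bound $E_\ap(-x)\le \G(1+\ap)/(\G(1+\ap)+x)$. The only divergence is in the middle step: you evaluate $\int_0^t(t-s)^{\ap-1}E_{\ap,\ap}(-q(t-s)^\ap)\,ds=\tfrac1q\left(1-E_\ap(-qt^\ap)\right)$ directly from the series and obtain the sharper constant $1$, which you then deliberately inflate to $\G(\ap)>1$ to match the statement, whereas the paper arrives at the $\G(\ap)$ factor through the integral relation producing $t^\ap E_{\ap,\ap+1}(-qt^\ap)$ together with the recurrence $E_{\ap,\gb}(x)=xE_{\ap,\ap+\gb}(x)+1/\G(\gb)$.
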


\begin{proof}
Parallel to the variation of constants formula for linear ODEs, it can be shown \cite[Proposition 4.5]{BJ} that $x(t)$ in the inequality \eqref{pq} satisfies 
\beq \bl{cv}
	\begin{split}
	x(t) &\,= x(0) E_\ap (- q\, t^\ap) + p \int_o^t \frac{1}{(t - s)^{1 - \ap}} E_{\ap, \ap} (-q (t - s)^\ap)\, ds  \\
	&\, - \int_0^t \frac{R(t)}{(t - s)^{1 - \ap}} E_{\ap, \ap} (-q (t - s)^\ap)\, ds \\
	&\, \leq x(0) E_\ap (- q\, t^\ap) + p \int_o^t \frac{1}{(t - s)^{1 - \ap}} E_{\ap, \ap} (-q (t - s)^\ap)\, ds,  \quad  t > 0, 
	\end{split}
\eeq
where $- R(t)$ is the nonpositive gap function between the two sides of the inequality \eqref{pq} and for $0 < \ap < 1$ the function $E_{\ap, \ap} (- x)$ is always positive on $x > 0$ due to the complete monotonicity as mentioned. 

By the integral relation with the two-parameter Mittag-Leffler functions \cite[Section 4.4]{GK}, we have 
\beq \bl{nt}
	\frac{1}{\G (\ap)} \int_o^t \frac{1}{(t - s)^{1 - \ap}} E_{\ap, \ap} (-q (t - s)^\ap)\, ds = t^\ap E_{\ap, \ap +1} (- q\, t^\ap).
\eeq
From \eqref{cv} and \eqref{nt} it follows that
\beq \bl{xp}
      x(t) \leq x(0) E_\ap (- q\, t^\ap) + p\,\G(\ap)\, t^\ap E_{\ap, \ap +1} (- q\, t^\ap), \quad t > 0.
\eeq
Note that \cite[Theorem 4.2]{KD}
$$
	E_{\ap, \gb}\, (x) = x\, E_{\ap, \ap + \gb}\, (x) + \frac{1}{\G(\gb)}.
$$
Then we have
\beq \bl{ap}
	 t^\ap E_{\ap, \ap +1} (- q\, t^\ap) = \frac{1}{q} \left[\frac{1}{\G(1)} - E_{\ap, 1}\,(- q\,t^\ap) \right] = \frac{1}{q} \left[1 - E_{\ap}\,(- q\,t^\ap)\right].
\eeq
Moreover, Theorem 3.6 in \cite{BJ} shows that the Mittag-Leffler function $E_\ap (- x)$ admits the two-side bounds
\beq \bl{Eb}
	\frac{1}{1 + \G(1 - \ap) x} \leq E_\ap (- x) \leq \frac{\G(1 + \ap)}{\G(1 + \ap) + x} , \quad  x > 0.
\eeq
Finally substitute \eqref{ap} and then \eqref{Eb} with $x = q\, t^\ap$ into \eqref{xp}, we end up with 
\begin{equation*}
	\begin{split}
	x(t) &\,\leq x(0) E_\ap (- q\, t^\ap) + p\,\G(\ap)\, t^\ap E_{\ap, \ap +1} (- q\, t^\ap)  \\[4pt]
	&\, = x(0) E_\ap (- q\, t^\ap) + \frac{p}{q}\, \G(\ap) \left(1 - E_{\ap}\,(- q\,t^\ap)\right)   \\
	&\, \leq x(0) \left(\frac{\G(1 + \ap)}{\G(1 + \ap) + q\, t^\ap} \right) + \frac{p}{q}\, \G(\ap)\left(\frac{\G(1 - \ap) q\,t^\ap}{1 + \G(1 - \ap) q\,t^\ap}\right)     \\
	&\, <  x(0) \left(\frac{\G(1 + \ap)}{\G(1 + \ap) + q\, t^\ap} \right) + \frac{p}{q}\, \G(\ap)\, ,  \quad  t > 0.
	\end{split}
\end{equation*}
The fractional Gronwall inequality \eqref{fG} is proved.
\end{proof}

In regard to fractional-order differential equations, the existence of global solutions for an initial value problem needs to be carefully addressed. Continuation of a local solution in time is a subtle issue because two local solution segments in time may not be simply concatenated like ODEs. In fact  two different solution trajectories of a Caputo FDE can intersect \cite[Theorem 6.1]{CT}. 

The next lemma presents an implementable sufficient condition, other than the stringent global Lipschitz condition, for the existence of global solutions in time with respect to initial value problems (IVP) of autonomous fractional differential equations.

\begin{lemma} \bl {L3}
	For an initial value problem of autonomous Caputo fractional differential equation\textup{(}s\textup{)}
$$
	\dca x(t) = f(x), \quad x(0) = x_0 \in \Omega \subset \mathbb{R}^n,
$$
if the scalar or vector function $f(x)$ is locally Lipschitz on its bounded or unbounded domain $\Omega \subset \mathbb{R}^n$ and a solution of this IVP satisfies 
\beq \bl{dp}
	\|x(t)\| \leq \|x_0\|\, C(t) + Q, \qquad  t \in I_{max} = [0, T_{max}),
\eeq
where $C(t)$ is a positive non-increasing continuous function, $C(t) \to 0$ as $t \to \infty$, and $Q > 0$ is a constant, then the maximal existence interval of this solution $I_{max} = [0, \infty)$ so that $x(t)$ is a global solution in time.  
\end{lemma}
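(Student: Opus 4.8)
The plan is to combine the standard local existence--uniqueness theory for Caputo initial value problems with a continuation (escape) principle, and then to rule out any finite-time escape by means of the hypothesized a priori bound \eqref{dp}. First I would pass to the equivalent Volterra integral formulation: since $f$ is locally Lipschitz on $\Omega$, the IVP is equivalent to
\[
	x(t) = x_0 + \frac{1}{\G(\ap)} \int_0^t (t - s)^{\ap - 1} f(x(s))\, ds ,
\]
and a contraction-mapping argument on the weakly singular kernel (as in \cite{BJ, KD}) yields a unique solution on a maximal existence interval $I_{max} = [0, T_{max})$. It then suffices to show $T_{max} = \infty$, and I would argue by contradiction, assuming $T_{max} < \infty$.

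Next I would exploit the a priori bound. Because $C(t)$ is positive and non-increasing, one has $C(t) \leq C(0)$ for every $t \geq 0$, so \eqref{dp} furnishes the uniform bound $\|x(t)\| \leq R := \|x_0\|\, C(0) + Q$ on the whole interval $[0, T_{max})$. Hence the trajectory stays inside the closed ball $\overline{B}_R$, and since $f$ is continuous it is bounded there, say $\|f(x(s))\| \leq M$. I would emphasize that only the boundedness of $C$, not the decay $C(t) \to 0$, is used for global existence; the decay becomes relevant only for the dissipative estimates of Section 3.

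The crux is the continuation step. Using the integral representation together with $\|f\| \leq M$, for $t_1 < t_2 < T_{max}$ one estimates $\|x(t_2) - x(t_1)\|$ by splitting the difference of the two weakly singular integrals; since the kernel $(t - s)^{\ap - 1}$ is integrable, this difference tends to $0$ as $t_1, t_2 \to T_{max}^-$. Thus $x(\cdot)$ is Cauchy and the limit $x^\ast := \lim_{t \to T_{max}^-} x(t)$ exists and lies in $\overline{B}_R$. Setting $x(T_{max}) = x^\ast$, I would re-solve the Volterra equation on $[0, T_{max} + \de]$ for some $\de > 0$: the history on $[0, T_{max}]$ is already determined and bounded, while the remaining contribution is a contraction for small $\de$ by the local Lipschitz property. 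This extends the solution strictly beyond $T_{max}$, contradicting maximality, whence $T_{max} = \infty$ and $x(t)$ is global.

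The main obstacle is precisely this continuation step: unlike ordinary differential equations, a Caputo equation cannot simply be ``restarted'' at an interior time, because its derivative carries the memory integral from $t = 0$, so one is forced to work through the Volterra integral equation and to control the weakly singular kernel both to produce the limit $x^\ast$ and to re-establish a local contraction past $T_{max}$. One mild caveat I would state explicitly is that, for a proper bounded domain $\Omega$, the norm bound \eqref{dp} precludes blow-up but not approach to $\partial \Omega$; in the present application the relevant domain is $\Omega = \mathbb{R}^n$ (equivalently $\overline{B}_R \subset \Omega$), so $x^\ast$ is an interior point and the continuation argument applies verbatim.
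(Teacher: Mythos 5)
Your proposal is correct and follows essentially the same route as the paper: the a priori bound \eqref{dp} with $C(t)\leq C(0)$ confines the trajectory to a fixed bounded ball on which the locally Lipschitz $f$ is bounded and Lipschitz, and then a continuation principle for the Caputo/Volterra formulation forces $T_{max}=\infty$. The only difference is that the paper simply cites the standard existence--continuation theorems \cite{KD, BJ, LS} at this point, whereas you unfold that black box (limit $x^\ast$ at $T_{max}^-$ via the weakly singular kernel, then re-solving past $T_{max}$) and also flag, correctly, the boundary caveat for a proper subdomain $\Omega$ that the paper's proof glosses over.
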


\begin{proof}
	According to the dissipative condition \eqref{dp}, any solution trajectory satisfies $\{x(t): t \in I_{max}\} \subset B_{x_0} = \{x \in \Omega: \|x\| \leq \|x_0\| C(0) + Q + 1\}$, which is a bounded ball in the state space $\mathbb{R}^n$. By the locally Lipschitz continuous condition, the vector field $f(x)$ is globally Lipschitz within the bounded set $B_{x_0}$. Then we can apply the standard existence and uniqueness theorems \cite{KD, BJ, LS} to assert that this solution $x(t)$ is a unique global solution, $I_{max} = [0, \infty)$, of this initial value problem.
\end{proof}

Absorbing set \cite{GY} defined below is an important concept in study of dissipative evolutionary dynamics.

\begin{definition} \bl{Ab}
	For a $d$-dimensional system $S$ of time-fractional differential equations, a bounded subset $B^* \subset \mathbb{R}^d$ is called an absorbing set if for any given bounded set $B$ in the state space $\mathbb{R}^d$ there is a finite time $T_B \geq 0$ such that all the solution trajectories of this system $S$ initially started in the set $B$ will permanently enter the set $B^*$ when $t  > T_B$. A system of fractional differential equations is called \emph{dissipative} if there exists an absorbing set. 
\end{definition}

The Young's inequality is widely seen in management of differential inequalities.  It states that for any positive numbers $x$ and $y$, if $\frac{1}{p} + \frac{1}{q} = 1$ and $p > 1, q > 1$ are constants, then one has
\beq \bl{Yg}
	x\,y \leq \frac{1}{p}\, \ve \,x^p + \frac{1}{q}\, C(\ve, p)\, y^q \leq \ve \,x^p + C(\ve, p)\, y^q, 
\eeq
where $C(\ve, p) = \ve^{-q/p}$ and the constant $\ve > 0$ can be arbitrarily given. 

Finally in this section, we introduce the definition of robust synchronization.
\begin{definition} \bl{Df}
	For an artificial neural network model $\mathcal{NW}$ in terms of time-fractional differential equations such as \eqref{Meq}, define its synchronous degree to be
\beq \bl{deg}
	\text{deg}_s (\mathcal{NW}) = \sup_{u^0 \in \,\Omega \,\subset \,\mathbb{R}^d} \left\{\max_{1 \leq i < j \leq m} \left\{\limsup_{t \to \infty} |u_i(t) - u_j(t)| \right\} \right\}.
\eeq
where $u^0 = (u_1 (0), \cdots, u_d (0))$ is any initial state of the neurons in the domain $\Omega$ of the state space $\mathbb{R}^d$. The neural network $\mathcal{NW}$ is called robust synchronizable if for any small $\ve > 0$, there exists an interneuron coupling threshold $P^* (\ve) > 0$ such that $\text{deg}_s (\mathcal{NW}) < \ve$ for all $P > P^*(\ve)$. 
\end{definition}

\section{\textbf{Dissipative Dynamics of Fractional Hopfield Neural Networks}}

The mathematical model \eqref{Meq}-\eqref{inc} of the Caputo fractional neural network $\mathcal{NW}$ with Hebbian learning rule can be formulated into an initial value problem of the evolutionary equation:
\begin{equation} \label{pb}
\begin{split}
	&\dca \,g(t) = F(g),   \quad \:\;  t > 0,  \\[3pt]
	&g(0) = g^0 \in X = \mathbb{R}^{m(1 + m) +1}.
\end{split}
\end{equation}
In \eqref{pb} the column vector function $g(t) = \text{col} \left(u_i (t), w_{ij}(t), \rho(t)): 1 \leq i, j \leq m\right)$ and the initial state vector $g(0) = g^0 = \text{col} ((u_i^0, \, w_{ij}^0, \rho^0): 1 \leq i, j \leq m)$.

In \eqref{pb} the $m(1 + m) +1$-dimensional nonlinear vector function
\begin{equation} \label{opf}
F(g) =
\begin{pmatrix}
	- a_i u_i + \sum_{j =1}^m w_{ij} f_j (u_j) + k\, \psi_i (\rho) u_i + J_1 - P \, \sum_{j=1}^m (u_i - u_j) : 1 \leq i \leq m \\[10pt]
	- c_{ij} w_{ij} + \gl_{ij} f_i(u_i) f_j (u_j) : 1 \leq i, j \leq m  \\[10pt]
	- b\, \rho + \sum_{i=1}^m \ga_i u_i
\end{pmatrix}
\end{equation}
is locally Lipschitz continuous. \eqref{pb} is the concise version of the model \eqref{Meq}-\eqref{inc} of the fractional memristive Hopfield-Hebbian neural networks.

Here are some remarks on the versatility and features: [1] The activation functions $\{f_i(x): 1 \leq i \leq m\}$ can be heterogeneous for different neurons, including Softplus $\textup{Sp} (x) = \ln \, (1 + e^x), \,\textup{ReLU}(x)$, $\textup{GeLU} (x) = (x/2)[1 + \textup{Erf}\, (x/\sqrt{2})]$, which are plausibly assumed to be uniformly bounded functions since the analogous potential of biological neurons are bounded from the resting level up to the bursting limit. [2] This model is compatible with typical memristor (chips or circuits) window functions such as sigmoidal function $\tanh (s)$, linear function $1 - \eta | s |$, quadratic function $1 - \eta s^2$, Jogelker memristor $1 - (2s -1)^{2q}$, and Strukov-Williams memristor $s (\eta - s)$ which is taken in this model and used in CMOS technology.  [3] The synaptic weights $w_{ij}(t)$ updated real time by the Hebbian rule actually incarnate the key biological neuron factors of locality, synaptic dissipativity, cooperative ($\gl_{ij} > 0$) or competitive ($\gl_{ij} < 0$) attributes. The matrix of Hebbian coefficients $\{\gl_{ij}\}$ describes topological pathways of information through the network and can be sparse in training or learning networks. 

We make the following Assumption and set up new notations:
\beq \bl{Asp}
	\begin{split}
	&a = \min\, \{a_i: 1 \leq i \leq m\}  > \frac{1}{2}\, k \eta^2, \;\;  |f_i(s)| \leq \gb, \;\;  J = \max\, \{|J_i|: 1 \leq i \leq m\},    \\
        &\psi_i (s) = s(\eta_i - s), \; \eta = \max\, \{\eta_i: 1 \leq i \leq m\}, \; \ga = \max\, \{ |\ga_i|: 1 \leq i \leq m\},   \\[2pt]
        &\gl = \max\, \{|\gl_{ij}|: 1 \leq i, j \leq m\}, \quad  c = \min\,\{c_{ij}: 1 \leq i, j \leq m\} > 0. 
	\end{split}
\eeq
First we address the existence of global solutions for this initial value problem \eqref{pb}. 

\begin{theorem} \label{T1}
	Given any initial state $g^0 \in X$, there exists a unique global solution $g(t; g^0),\;  t \in [0, \infty),$ for the initial value problem \eqref{pb} of the fractional memristive Hopfield neural network $\mathcal{NW}$ described by the model \eqref{Meq} with the Assumption \eqref{Asp}. 
\end{theorem}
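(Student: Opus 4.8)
The plan is to combine local existence with an \emph{a priori} dissipative bound and then invoke Lemma \ref{L3}. Since the vector field $F(g)$ in \eqref{opf} is locally Lipschitz continuous on $X = \mR^{m(1+m)+1}$, the standard local existence and uniqueness theory for Caputo equations yields a unique local solution $g(t; g^0)$ on a maximal interval $I_{max} = [0, T_{max})$. To upgrade this to a global solution it suffices, by Lemma \ref{L3}, to verify that every such solution obeys a bound of the form \eqref{dp}, namely $\|g(t)\| \leq \|g^0\|\, C(t) + Q$ with $C(t)$ positive, non-increasing, and $C(t) \to 0$. I would obtain exactly this shape from the fractional Gronwall inequality of Lemma \ref{L2}, so the whole task reduces to producing a scalar differential inequality $\dca V \leq p - q\, V$ for a suitable energy functional $V(t)$.

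First I would introduce a \emph{scaled} energy $V(t) = \sum_{i=1}^m u_i^2 + \nu \sum_{i,j=1}^m w_{ij}^2 + \theta\, \rho^2$ with positive weights $\nu, \theta$ to be fixed. Because the Caputo derivative does not satisfy the classical product rule, I would use Lemma \ref{L1} termwise to get $\tfrac12 \dca V \leq \sum_i u_i \dca u_i + \nu \sum_{i,j} w_{ij}\dca w_{ij} + \theta\, \rho\, \dca \rho$, and then substitute the right-hand sides of \eqref{Meq}. The key bounds are: (i) the memristive term is controlled by $\psi_i(\rho) = \rho(\eta_i - \rho) \leq \eta_i^2/4 \leq \eta^2/4$, so that $k\,\psi_i(\rho)\, u_i^2 \leq \tfrac{k\eta^2}{4} u_i^2$ and, thanks to Assumption \eqref{Asp} with $a > \tfrac12 k\eta^2$, the net coefficient $-(a - \tfrac{k\eta^2}{4})$ of $\sum_i u_i^2$ stays strictly negative with margin exceeding $a/2$; (ii) the interneuron coupling is dissipative and can be discarded, since $-P \sum_{i,j}(u_i - u_j)u_i = -\tfrac{P}{2}\sum_{i,j}(u_i - u_j)^2 \leq 0$; and (iii) the Hebbian cross term $\sum_{i,j} w_{ij} f_j(u_j) u_i$, the bias term $\sum_i J_i u_i$, the Hebbian forcing $\gl_{ij} f_i(u_i) f_j(u_j)$, and the $\rho$-coupling $\rho \sum_i \ga_i u_i$ are all estimated by Young's inequality \eqref{Yg} together with the uniform bounds $|f_i| \leq \gb$, $|J_i| \leq J$, $|\gl_{ij}| \leq \gl$, $|\ga_i| \leq \ga$ from \eqref{Asp}.

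The crux is to choose the weights and Young parameters so that all quadratic cross-terms are absorbed into the dissipative diagonal. I would first fix a small Young parameter to split the Hebbian cross term, putting a small multiple of $\sum_i u_i^2$ back into the $u$-block (kept below the margin $a - \tfrac{k\eta^2}{4}$) at the cost of a large multiple of $\sum_{i,j} w_{ij}^2$; this is then dominated by choosing the weight $\nu$ large, since enlarging $\nu$ only strengthens the $-\nu c \sum_{i,j} w_{ij}^2$ dissipation with no adverse feedback. Symmetrically, the $\rho$-to-$u$ feedback produces a term $\tfrac{\theta \ga^2 m}{2b} \sum_i u_i^2$, which I would render negligible by taking the weight $\theta$ small, while $-\theta b\, \rho^2$ still retains half of its dissipation. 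With these scaled group choices the estimate closes to $\dca V \leq p - q\, V$ for explicit constants $p, q > 0$, the surviving constants from the bias, the Hebbian forcing, and the Young remainders assembling into $p$.

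Finally I would apply Lemma \ref{L2} to conclude $V(t) \leq V(0)\,\tfrac{\G(1+\ap)}{\G(1+\ap) + q t^\ap} + \tfrac{p}{q}\G(\ap)$, and since $\min(1,\nu,\theta)\,\|g(t)\|^2 \leq V(t)$ and $V(0) \leq \max(1,\nu,\theta)\,\|g^0\|^2$, taking square roots gives a bound of exactly the form \eqref{dp} with $C(t) = \big(\tfrac{\G(1+\ap)}{\G(1+\ap)+q t^\ap}\big)^{1/2} \to 0$. Lemma \ref{L3} then promotes the local solution to a unique global one on $[0,\infty)$. I expect the main obstacle to be step (iii) together with the weight selection: without the scaling the $\rho$-coupling coefficient $\tfrac{\ga^2 m}{2b}$ is not controlled by any hypothesis and could overwhelm the dissipation, so the genuine content of the proof is the simultaneous, weighted (``scaled group'') balancing of the Hebbian and memristive couplings against the dissipative diagonal, underpinned by the boundedness of the activation functions that decouples the $w$-dynamics as bounded forcing.
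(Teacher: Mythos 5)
Your proposal is correct in substance and reaches the same endpoints (an a priori bound of the form \eqref{dp}, then Lemma \ref{L3}), but it organizes the energy estimate differently from the paper. You build a single weighted functional $V = \sum_i u_i^2 + \nu \sum_{i,j} w_{ij}^2 + \theta\,\rho^2$ and close one inequality $\dca V \leq p - qV$ by choosing $\nu$ large (to absorb the Hebbian cross term $\sum_{i,j} w_{ij} f_j(u_j) u_i$ into the $-\nu c\sum w_{ij}^2$ dissipation) and $\theta$ small (to make the $\rho$-to-$u$ feedback $\tfrac{\theta m \ga^2}{2b}\sum_i u_i^2$ negligible). The paper instead exploits the cascade structure: because $|f_i|\leq \gb$, the $w_{ij}$-equations are self-contained, so it first applies Lemmas \ref{L1}--\ref{L2} to each $w_{ij}^2$ alone to get the explicit bound \eqref{wij}, and only then feeds that bound into a two-block functional $\sum_i C_1 u_i^2 + \rho^2$ with the single scaling constant \eqref{C1} playing the role of your $\theta$. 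Your route is the more standard Lyapunov construction and avoids the intermediate Gronwall step; the paper's route buys explicit constants free of auxiliary weights $\nu,\theta$ — in particular the decaying factor $W_0\sqrt{G_\ap(t)}$ and the radius $M(\ap)$ in \eqref{M} — which are then used verbatim in Theorem \ref{T2} and in the computable synchronization threshold \eqref{Ph}. Two minor notes: your bound $\psi_i(\rho)u_i^2 \leq \tfrac{\eta_i^2}{4}u_i^2$ is actually sharper than the paper's $\tfrac12 k\eta^2 u_i^2$ and is consistent with Assumption \eqref{Asp}; and your observation that enlarging $\nu$ creates no adverse feedback is exactly right, since the $w$-forcing $\gl_{ij} f_i f_j$ is bounded independently of $w$ — this is the same structural fact the paper uses to decouple the $w$-block.
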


\begin{proof} 
By the standard theorems on solutions of Caputo fractional differential equations \cite{KD, BJ} and the locally Lipschitz continuous condition here, for any given initial state $g^0 \in X$ there exists a unique local solution $g(t, g^0)$ of \eqref{pb}, whose maximum existence interval is $I_{max} = [0, T_{max}(g^0))$ may depending on $g^0$. 
	
Multiply the $w_{ij}$-equation in \eqref{Meq} by $w_{ij}(t)$. Lemma \ref{L1} with \eqref{Asp} shows that
\beq \bl{wd}
	\begin{split}
	\frac{1}{2} \dca w^2_{ij}(t) & \leq w_{ij}(t) \dca w_{ij}(t) = - c_{ij}\, w^2_{ij}(t) + \gl_{ij} \,w_{ij}(t) f_i(u_i(t)) f_j(u_j(t))   \\
	& \leq - c \,w^2_{ij}(t) + \gl \,\gb^2\, w_{ij}(t) \leq - \frac{1}{2}\, c\, w^2_{ij}(t) + \frac{1}{2 c}\, \gl^2 \gb^4, \quad  t \in I_{max}.
	\end{split}
\eeq
Then by Lemma \ref{L2} we have
\beq \bl{wij}
	w^2_{ij}(t) \leq w^2_{ij}(0)\left( \frac{\G(1 + \ap)}{\G(1 + \ap) + c\, t^\ap} \right) + \frac{\gl^2 \gb^4}{c^2}\G(\ap) \leq w^2_{ij}(0) + \frac{\gl^2 \gb^4}{c^2} \G(\ap),  \;\; 1 \leq i, j \leq m.
\eeq
Denote by $W_0 = [\max \{\sum_{j=1}^m w^2_{ij}(0): i = 1, \cdots, m\}]^{1/2}$. It follows that
\beq \bl{wb}
	\sum_{i=1}^m \sum_{j=1}^m w^2_{ij}(t) \leq m W_0^2 \left( \frac{\G(1 + \ap)}{\G(1 + \ap) + c\, t^\ap} \right) + \frac{m^2\gl^2 \gb^4}{c^2} \G(\ap),  \;\; t \in I_{max}.
\eeq

Next multiply the $u_i$-equation in \eqref{Meq} by $C_1 u_i(t)$ for $1 \leq i \leq m$ and sum them up. Using \eqref{Asp} and \eqref{wb} we get	 
\beq \bl{ut}
	\begin{split}
	& \frac{1}{2} \dca \sum_{i = 1}^m C_1 u^2_i + \frac{1}{2}C_1 P \,\sum_{i=1}^m \sum_{j=1}^m \, (u_i - u_j)^2 \leq C_1 u_i \dca u_i + \frac{1}{2}C_1 P \,\sum_{i=1}^m \sum_{j=1}^m \, (u_i - u_j)^2  \\
	= &\,\sum_{i=1}^m C_1 \left[- a_i u^2_i  + \sum_{j =1}^m w_{ij}(t) f_j (u_j) u_i + k \psi_i (\rho) u_i^2 +  J_i u_i \right]   \\
	\leq &\,\sum_{i=1}^m C_1 \left[- a u_i^2 + \sqrt{\sum_{j=1}^m w_{ij}^2(0) G_\ap (t) + \frac{m \gl^2 \gb^4}{c^2} \G(\ap)}\; \gb |u_i | + \left[k \rho (\eta_i - \rho ) u_i^2 + J |u_i| \right] \right]   \\
	\leq &\, - \sum_{i=1}^m C_1 \left[a - \frac{1}{2} k \eta^2\right] u_i^2 (t) + \sum_{i=1}^m C_1 \left[J + \gb \left[W_0 \sqrt{G_\ap(t)} + \frac{1}{c}\sqrt{m \G(\ap)} \gl \gb^2 \right] \right] |u_i (t) |   \\
	\leq &\, - \frac{1}{2} \sum_{i=1}^m C_1\left[a - \frac{1}{2} k \eta^2\right] u_i^2(t) + \frac{1}{2} \,C_1 m  \frac{\left[J +  \gb \left[W_0 \sqrt{G_\ap (t)} + \frac{1}{c}\sqrt{m \G(\ap)} \gl \gb^2 \right]\,\right]^2}{a - \frac{1}{2} k \eta^2} 
	\end{split}
\eeq
for $t \in I_{max}$, where  $C_1 > 0$ is a scaling constant yet to be specified, the double sum of interneuron coupling terms in the first inequality of \eqref{ut} comes from  
\begin{gather*}
	- C_1 P \sum_{i=1}^m \sum_{j=1}^m \left[ (u_j - u_i)u_i + (u_i - u_j)u_j\right] \\
	= - C_1 P \sum_{1 \leq i < j \leq m} (u_i - u_j)^2 = - \frac{1}{2} C_1 P \,\sum_{i=1}^m \sum_{j=1}^m \, (u_i - u_j)^2    
\end{gather*}
and moved to the left side with the derivative terms. In the third inequality of \eqref{ut}, 
\beq \bl{Gt}
	G_\ap (t) =  \frac{\G(1 + \ap)}{\G(1 + \ap) + c\, t^\ap} \leq 1 \quad \text{and} \quad \lim_{t \to \infty} G_\ap (t) = 0.
\eeq 
We also treated the memductance synaptic term in the third inequality of \eqref{ut} as 
$$
	k \rho\, (\eta_i - \rho ) u_i^2 = k (\rho u_i) (\eta u_i) - k \rho^2 u_i^2 \leq \frac{1}{2} k \left(\eta^2 u_i^2 + \rho^2 u_i^2 \right) - k \rho^2 u_i^2 \leq \frac{1}{2} k\, \eta^2 u_i^2.
$$
Cauchy inequality is used for the second sum to reach the last inequality of \eqref{ut}.

Then multiply the $\rho$-equation in \eqref{Meq} with $\rho (t)$. Using Young's inequality \eqref{Yg} we have
\beq \bl{rt}
	\begin{split}
	&\frac{1}{2} \dca \rho^2 (t) \leq \rho(t) \dca \rho (t) = \sum_{i=1}^m \ga_i \,u_i(t) \rho(t) - b\, \rho^2(t)  \\[2pt]
	\leq \sum_{i=1}^m &\, \left[\frac{m \ga_i^2}{2b} u_i^2(t) + \frac{b}{2m} \rho^2(t) \right] - b \rho^2(t) = \sum_{i=1}^m \frac{m \ga_i^2}{2b} \,u_i^2 (t) - \frac{b}{2} \,\rho^2(t), \;\;  t \in I_{max}.
	\end{split}
\eeq
Combine the two differential inequalities \eqref{ut} and \eqref{rt} together. It yields 
\beq \bl{cur}
	\begin{split} 
	&\dca \left(\sum_{i = 1}^m C_1 u^2_i + \rho^2 (t) \right) + C_1 P \,\sum_{i=1}^m \sum_{j=1}^m \, (u_i - u_j)^2     \\
	\leq & - \sum_{i=1}^m C_1\left[a - \frac{1}{2} k \eta^2\right] u_i^2(t) + C_1 m \frac{\left[J +  \gb \left[W_0 \sqrt{G_\ap (t)} + \frac{1}{c}\sqrt{m \G(\ap)} \gl \gb^2 \right]\,\right]^2}{a - \frac{1}{2} k \eta^2}   \\
	& + \left(\sum_{i=1}^m \frac{m \ga^2}{b} \,u_i^2 (t) - b\,\rho^2(t) \right),  \quad t \in I_{max}.
	\end{split}
\eeq
Now we choose $C_1$ to be the positive constant 
\beq \bl{C1}
	C_1 = \frac{\frac{ m\ga^2}{b} + b}{a - \frac{1}{2} k \eta^2}  \; \quad \text{which means} \quad C_1\left(a - \frac{1}{2} k \eta^2\right) - \frac{m \ga^2}{b} = b\, .
\eeq
Substitute \eqref{C1} in \eqref{cur}. Since $C_1 P \,\sum_{i=1}^m \sum_{j=1}^m \, (u_i - u_j)^2 \geq 0$ and $G_\ap(t) \leq 1$, it results in
\beq \bl{UR}
	\dca \left(\sum_{i = 1}^m C_1 u_i^2(t) + \rho^2(t)\right) + b \left(\sum_{i=1}^m u_i^2(t) + \rho^2(t) \right) \leq C_2 (W_0), \quad  t \in I_{max},
\eeq
where 
\beq \bl{C2}
        C_2 (W_0) = m \left(\frac{m \ga^2}{b} + b \right) \left[J +  \gb \left[W_0 + \frac{1}{c}\sqrt{m \G(\ap)} \gl \gb^2 \right]\,\right]^2 \left(a - \frac{1}{2} k \eta^2\right)^{- 2}	
\eeq
is a uniform constant independent of any initial state $(u_1^0, \cdots, u_m^0, \rho^0)$ of the network neurons. The fractional differential inequality \eqref{UR} infers that
\beq \bl{urb}
	\dca \left(\sum_{i = 1}^m C_1 u_i^2(t) + \rho^2(t)\right) + \de \left(\sum_{i=1}^m C_1\, u_i^2(t) + \rho^2(t) \right) \leq C_2 (W_0), \;\;  t \in I_{max},
\eeq
where 
\beq \bl{dr}
	\de = b \, \min \left\{\frac{1}{C_1},\, 1\right\}.
\eeq
Apply the fractional Gronwall inequality in Lemma \ref{L2} to \eqref{urb}, It is shown that
\beq \bl{Gur}
	\begin{split}
	&\min \{C_1, 1\} \left[\sum_{i = 1}^m u_i^2(t) + \rho^2(t)\right] \leq \sum_{i = 1}^m C_1 u_i^2(t) + \rho^2(t)   \\
	\leq &\, \sum_{i = 1}^m \left(C_1 u_i^2(0) + \rho^2(0)\right) E_\ap (- \de\, t^\ap) + \, \frac{C_2(W_0)}{\de}\,\G(\ap) \left(1 - E_\ap (- \de\, t^\ap) \right)      \\
	\leq &\, \sum_{i = 1}^m \left(C_1 u_i^2(0) + \rho^2(0)\right) \left( \frac{\G(1 + \ap)}{\G(1 + \ap) + \de\, t^\ap} \right) + \, \frac{C_2(W_0)}{\de}\,\G(\ap) ,  \quad \text{for} \; \; t \in I_{max}.
	\end{split}
\eeq
The inequalities \eqref{wij} for network weights and \eqref{Gur} for network neurons together show that the condition \eqref{dp} in Lemma \ref{L3} is satisfied by all the solutions $g(t; g^0)$ of the IVP \eqref{pb} for any initial state $g^0$. Therefore, the existence time interval  $I_{max} = [0, \infty)$ for every local solution $g(t;g^0) = \text{col}\, (u_i (t), w_{ij}(t), \rho(t))$ of this IVP \eqref{pb}. For any given initial state there exists a unique global solution of this neural network $\mathcal{NW}$. The proof is completed.
\end{proof}

The next theorem exhibits dissipative dynamics of this fractional neural network.

\begin{theorem} \label{T2}
	For fractional memristive Hopfield neural networks represented by the model \eqref{Meq} with the Assumption \eqref{Asp}, there exists a bounded absorbing set in the space $\mathbb{R}^{m + 1}$ of neuron variables $(u_1, \cdots, u_m, \rho)$, which is the ball
\beq \label{Br}
	B_\ap^* = \{ (u_1, \cdots, u_m, \rho) \in \mathbb{R}^{m +1}: \|(u_1, \cdots, u_m, \rho) \|^2 \leq M(\ap)\},
\eeq 
where 
\beq \bl{M}
	M(\ap) = 1 +  \frac{m \G(\ap) \left(\frac{m \ga^2}{b} + b \right) \left[J +  \gb \left[1 + \frac{1}{c}\sqrt{m \G(\ap)} \gl \gb^2 \right]\,\right]^2}{\delta \min \left\{\left(\frac{m \ga^2}{b} + b \right), \left(a - \frac{1}{2} k \eta^2\right)\right\} \left(a - \frac{1}{2} k \eta^2\right)}
\eeq
is a uniform positive constant independent of any initial state.
\end{theorem}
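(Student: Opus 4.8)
The plan is to build the absorbing set directly on top of the a priori estimate already obtained in the proof of Theorem \ref{T1}, whose culmination is the fractional differential inequality \eqref{urb} together with the Gronwall bound \eqref{Gur}. The one feature of \eqref{Gur} that obstructs a uniform (initial-state independent) conclusion is that the forcing constant $C_2(W_0)$ in \eqref{C2} still carries the initial weight norm $W_0$; its $t\to\infty$ limit $\frac{1}{\min\{C_1,1\}}\frac{C_2(W_0)}{\de}\G(\ap)$ therefore depends on where the trajectory started. The whole point of the proof is to remove this dependence by exploiting that the Hebbian weights are themselves self-dissipative, so that after a finite transient the factor $W_0$ may be replaced by a universal constant.

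First I would record the absorption of the weights. From \eqref{wij} and the decay $G_\ap(t)\to 0$ in \eqref{Gt}, for initial data lying in any bounded set $B$ (so that $W_0\le R_B$) there is a time $T_B\ge 0$ after which $\sum_{j=1}^m w_{ij}^2(0)\,G_\ap(t)\le 1$; combined with $\sqrt{x+y}\le\sqrt x+\sqrt y$ this yields the uniform bound $\big(\sum_{j=1}^m w_{ij}^2(t)\big)^{1/2}\le 1+\frac{1}{c}\sqrt{m\,\G(\ap)}\,\gl\gb^2$ for every $t>T_B$ and every $1\le i\le m$. This is exactly the bracket appearing inside $M(\ap)$ in \eqref{M}, with $W_0$ replaced by $1$.

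Next I would re-run the scalar estimate \eqref{ut}–\eqref{urb} on the interval $t>T_B$, now inserting the uniform weight bound in place of the transient bound used for Theorem \ref{T1}. With the same scaling constant $C_1$ from \eqref{C1} and the same $\de$ from \eqref{dr}, this produces $\dca y(t)+\de\,y(t)\le C_2^{*}$ for $t>T_B$, where $y(t)=\sum_{i=1}^m C_1 u_i^2(t)+\rho^2(t)$ and $C_2^{*}$ is the initial-state independent constant obtained from \eqref{C2} by setting $W_0=1$. It then remains to convert this into the ultimate bound $\limsup_{t\to\infty} y(t)\le \frac{C_2^{*}}{\de}\,\G(\ap)$, after which division by $\min\{C_1,1\}$ and the comparison computed from \eqref{dr}, \eqref{C1} identify the right-hand side with $M(\ap)-1$, giving $\limsup_{t\to\infty}\big(\sum_{i=1}^m u_i^2(t)+\rho^2(t)\big)\le M(\ap)-1<M(\ap)$. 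The surplus $+1$ in \eqref{M} then guarantees that each trajectory starting in $B$ permanently enters $B_\ap^{*}$, which is precisely Definition \ref{Ab}.

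The step I expect to be the genuine obstacle is this last one, because the nonlocal memory of the Caputo derivative forbids simply restarting Lemma \ref{L2} at the time $T_B$: the derivative at $t>T_B$ still integrates the history on $[0,T_B]$, so a time shift would change the operator. To get around this I would not restart Gronwall but instead use the variation-of-constants representation underlying \eqref{cv} for $y$ on the whole half-line, applied with the non-increasing time-dependent forcing $\widetilde C_2(s)$ obtained by retaining the decaying factor $W_0\sqrt{G_\ap(s)}$ in place of $W_0$, and split the convolution $\int_0^t(t-s)^{\ap-1}E_{\ap,\ap}(-\de(t-s)^\ap)\,\widetilde C_2(s)\,ds$ at $T_B$. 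On $[T_B,t]$ the forcing is $\le C_2^{*}$, and enlarging the integration back to $[0,t]$ together with the identities \eqref{nt}–\eqref{ap} bounds that piece by $\frac{C_2^{*}}{\de}\G(\ap)\big(1-E_\ap(-\de t^\ap)\big)$. On the fixed interval $[0,T_B]$ the forcing is at most $C_2(W_0)$, but the kernel $(t-s)^{\ap-1}E_{\ap,\ap}(-\de(t-s)^\ap)$ decays like $(t-s)^{-\ap-1}$ as $t\to\infty$, so this initial-state dependent contribution vanishes in the limit. Taking $\limsup_{t\to\infty}$ then delivers the uniform ultimate bound, and with it the absorbing ball $B_\ap^{*}$.
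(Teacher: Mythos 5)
Your proposal is correct and follows the same overall strategy as the paper's own proof: both build the absorbing ball directly on the estimates \eqref{ut}--\eqref{Gur} from Theorem \ref{T1}, and both obtain an initial-state-independent ultimate bound by exploiting that the factor $W_0\sqrt{G_\ap(t)}$ decays to $0<1$ by \eqref{Gt} --- which is exactly where the surplus $+1$ and the bracket $1+\frac{1}{c}\sqrt{m\,\G(\ap)}\,\gl\gb^2$ in \eqref{M} come from. Where you genuinely differ is in how the decaying forcing is handled. The paper passes from \eqref{Gur} and \eqref{C2} to \eqref{Mb} by, in effect, applying Lemma \ref{L2} with the time-dependent forcing obtained by retaining $W_0\sqrt{G_\ap(t)}$ inside $C_2$, and then takes $\limsup_{t\to\infty}$; this step is asserted rather than justified, since Lemma \ref{L2} is stated and proved only for a constant $p$. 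Your Duhamel splitting of $\int_0^t(t-s)^{\ap-1}E_{\ap,\ap}(-\de(t-s)^\ap)\,\widetilde C_2(s)\,ds$ at $T_B$, combined with the large-argument decay of $E_{\ap,\ap}(-x)$ that makes the $[0,T_B]$ contribution vanish as $t\to\infty$, supplies precisely the justification the paper glosses over, and your observation that one cannot simply restart Lemma \ref{L2} at $T_B$ because the Caputo operator integrates the full history is well taken. The trade-off is that your argument is longer and leans on the asymptotics of $E_{\ap,\ap}$, whereas the paper's is shorter but leaves the time-dependent Gronwall step implicit; the ultimate bound and the constant $M(\ap)$ you arrive at agree with \eqref{sb} and \eqref{M}.
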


\begin{proof}
This is the consequence of the uniform estimate \eqref{cur}-\eqref{C1} for all the solutions shown in the proof of Theorem \ref{T1}. From \eqref{Gur} and \eqref{C2} we get
\begin{equation}  \bl{Mb}
	\begin{split}
	&\min \{C_1, 1\} \left[\sum_{i = 1}^m u_i^2(t) + \rho^2(t)\right] \leq \sum_{i = 1}^m \left(C_1 u_i^2(0) + \rho^2(0)\right) \left( \frac{\G(1 + \ap)}{\G(1 + \ap) + \de\, t^\ap} \right)  \\
	+ &\,\frac{1}{\de}\, m\G(\ap) \left(\frac{m \ga^2}{b} + b \right) \left[J +  \gb \left[W_0\sqrt{G_\ap(t)} + \frac{1}{c}\sqrt{m \G(\ap)} \gl \gb^2 \right]\,\right]^2 \left(a - \frac{1}{2} k \eta^2\right)^{- 2}
	\end{split}
\end{equation}
for $t > 0$. Note that
$$
	\lim_{t \to \infty} G_\ap(t) = \lim_{t \to \infty} \frac{\G(1 + \ap)}{\G(1 + \ap) + c\, t^\ap} = 0,  \qquad  \lim_{t \to \infty} \frac{\G(1 + \ap)}{\G(1 + \ap) + \delta\, t^\ap} = 0.
$$
The summation term on the right-hand side and the term $W_0 \sqrt{G_\ap(t)}$ in \eqref{Mb} converge to zero $(< 1)$  as $t \to \infty$. Divide \eqref{Mb} by $\min \{C_1, 1\} = \min \left\{\frac{ \frac{m\ga^2}{b} + b}{a - \frac{1}{2} k \eta^2},1\right\}$. Then we obtain 
\beq  \bl{sb}
	\limsup_{t \to \infty} \left( \sum_{i = 1}^m u_i^2(t) + \rho^2(t) \right) < M(\ap).
\eeq 
Moreover, it is easy to see that the limsup convergence is uniform for all initial states of the neurons confined in any given bounded set of $\mathbb{R}^{m+1}$. By Definition \ref{Ab}, the ball $B_\ap^*$ described in \eqref{Br}-\eqref{M} is an absorbing set so that the fractional memristive Hopfield neural network $\mathcal{NW}$ is a globally dissipative dynamical system.
\end{proof}

\section{\textbf{Main Result on Robust Synchronization}} 

In this section we shall prove the main result on robust synchronization of the fractional memristive Hopfield-Hebbian neural networks. The approach is to tackle the fractional differencing equations of the pairwise gap functions of the neuron state variables between any two nodes in the network. Then conduct \emph{a priori} estimates to sharply derive a computable threshold condition in terms of the known parameters, which will trigger occurrence of robust synchronization of the entire neural network once this condition is satisfied by the interneuron coupling strength. 

The gap functions between any two neuron nodes $\mathcal{N}_i$ and $\mathcal{N}_j$ are denoted by 
\begin{gather*}
	U_{ij} (t) = u_i(t) - u_j (t), \quad \text{for} \;\;  1\leq i \neq j \leq m. 
\end{gather*}
These gap functions satisfy the fractional differencing equations 
\beq \bl{deq}
	\begin{split}
	\dca U_{ij}(t) =&\, - \left(a_i U_{ij} + (a_i - a_j) u_j\right) + \sum_{\ell =1}^m \, \left(w_{i\ell} - w_{j\ell}\right) f_\ell (u_\ell)  \\
	&\,+ k\, (\psi_i (\rho) u_i - \psi_j (\rho) u_j) + (J_i - J_j) - m PU_{ij},
	\end{split}
\eeq
where the last term is $- P \sum_{\ell =1}^m [(u_i - u_\ell) - (u_j - u_\ell)] = - P\, \sum_{\ell = 1}^m (u_i - u_j) = - mP U_{ij}$.

We introduce the following secondary parameters involved in the estimates of the fractional differencing equations \eqref{deq} of the gap functions:
\begin{gather*}
	a^* = \max \{|a_i - a_j|: 1 \leq i, j \leq m \},   \\[2pt]
	\eta^* = \max \{|\eta_i - \eta_j|: 1 \leq i, j \leq m\},  \quad   J^* = \max \{|J_i - J_j|: 1 \leq i, j \leq m\}.
\end{gather*}
The main result of this work is the following theorem on the threshold condition for the interneuron coupling strength to achieve a robust synchronization of the entire fractional neural network $\mathcal{NW}$ described in \eqref{Meq}. The methodology in the proof is based on the global dissipative dynamics of this neural network and through analytic \emph{a priori} estimation of the fractional differencing equations. 

\begin{theorem} \bl{TM}
	The time-fractional memristive Hopfield-Hebbian neural network $\mathcal{NW}$ presented in the model \eqref{Meq} with the Assumption \eqref{Asp} is robust synchronizable. For any prescribed small gap $\ve > 0$, there exists a constant threshold 
\beq \bl{Ph}
	P_\ap^*(\ve) = \frac{1}{m\, \ve} \left[a^* M^{1/2}(\ap) + k \eta^* M^{3/2}(\ap) + 2\gb \left[1 + \frac{1}{c}\sqrt{m \G(\ap)} \gl \gb^2 \right] + J^*\right]  > 0,
\eeq 
where $M(\ap)$ is shown in \eqref{M}, such that if the interneuron coupling strength coefficient $P > P_\ap^*(\ve)$, then the neural network $\mathcal{NW}$ is robust synchronized. Namely, for any prescribed small gap $\ve > 0$,
\beq \bl{asd}
	\deg_s \,(\mathcal{NW}) = \sup_{(u_1^0, \,\cdots, \,u_m^0, \,\rho^0)\, \in \,\mathbb{R}^{m +1}} \left\{ \max_{1 \leq i < j \leq m} \left\{\limsup_{t \to \infty} \, |u_i (t) - u_j(t)| \right\}\right\} < \ve.
\eeq
Moreover, the synchronization threshold $P_\ap^* (\ve)$ is strictly decreasing with respect to the fractional-order $\ap \in (0, 1)$.  
\end{theorem}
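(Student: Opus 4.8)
The plan is to run a dissipativity argument on each pairwise gap $U_{ij}(t)=u_i(t)-u_j(t)$ individually, using the interneuron coupling as the stabilizing engine and the absorbing set of Theorem \ref{T2} to render every remaining term uniformly bounded in time. First I would multiply the differencing equation \eqref{deq} by $U_{ij}(t)$ and apply Lemma \ref{L1} to pass to the squared gap, obtaining $\frac12\dca U_{ij}^2(t)\le U_{ij}(t)\dca U_{ij}(t)$. The coupling term contributes exactly $-mP\,U_{ij}^2$, which is precisely the mechanism that will force synchronization. The self-decay and the memristive terms should then be reorganized: writing $\psi_i(\rho)u_i-\psi_j(\rho)u_j=\psi_i(\rho)U_{ij}+\rho(\eta_i-\eta_j)u_j$ isolates the genuinely quadratic part $(-a_i+k\psi_i(\rho))U_{ij}^2$, and since $\psi_i(\rho)\le\eta^2/4$ together with the Assumption \eqref{Asp} that $a>\tfrac12 k\eta^2$ forces this coefficient to be nonpositive, that part may be discarded, leaving only $-mP\,U_{ij}^2$ as the damping.

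Next I would enter the absorbing regime of Theorem \ref{T2}: for $t$ beyond the absorbing time one has $|u_i|,|\rho|\le M^{1/2}(\ap)$ and, by the weight estimate \eqref{wij}, $\limsup_{t\to\infty}\big(\sum_\ell w_{i\ell}^2\big)^{1/2}\le 1+\tfrac1c\sqrt{m\G(\ap)}\,\gl\gb^2$. Bounding each surviving inhomogeneous term against these radii should reproduce, summand by summand, the bracket appearing in \eqref{Ph}: the mismatched decay rates yield $a^\ast M^{1/2}(\ap)$; the residual memristive cross term $k\rho(\eta_i-\eta_j)u_j\,U_{ij}$, estimated with $|U_{ij}|\le 2M^{1/2}(\ap)$, yields the contribution $k\eta^\ast M^{3/2}(\ap)$; the weight difference combined with $|f_\ell|\le\gb$ yields $2\gb\big(1+\tfrac1c\sqrt{m\G(\ap)}\,\gl\gb^2\big)$; and the mismatched biases give $J^\ast$. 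Calling this total $G$, the estimate collapses to a scalar fractional inequality of the form $\dca|U_{ij}(t)|\le -mP\,|U_{ij}(t)|+G$ valid for large $t$, with $G=m\,\ve\,P_\ap^\ast(\ve)$.

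I would then apply the fractional Gronwall inequality of Lemma \ref{L2}, with $q=mP$ and forcing $G$, and pass to the limit superior to obtain $\limsup_{t\to\infty}|U_{ij}(t)|\le G/(mP)$ uniformly over all initial states in any bounded set and over all pairs $i<j$. Imposing $G/(mP)<\ve$ is exactly the requirement $P>P_\ap^\ast(\ve)$, so by Definition \ref{Df} the network is robust synchronizable and \eqref{asd} holds. For the final monotonicity claim, I would observe that $P_\ap^\ast(\ve)$ depends on $\ap$ only through $M(\ap)$ and $\sqrt{m\G(\ap)}$; since the digamma function is strictly negative on $(0,1)$, $\G$ is strictly decreasing there, so $\G(\ap)$, $\sqrt{\G(\ap)}$, and $M(\ap)$ (which is increasing in $\G(\ap)$ by its formula \eqref{M}) all strictly decrease with $\ap$, whence $P_\ap^\ast(\ve)$ is strictly decreasing on $(0,1)$.

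The hard part will be the uniform control of the memristive cross term $k(\psi_i(\rho)u_i-\psi_j(\rho)u_j)$, which is genuinely nonlinear and simultaneously couples $u_i$, $u_j$, and $\rho$; the decomposition above plus the absorbing bound is what tames it, and producing the exact powers of $M(\ap)$ — in particular the $M^{3/2}(\ap)$ — hinges on bounding the residual $U_{ij}$ factor by the absorbing radius rather than retaining it as a variable. A second delicate point is the Gronwall step itself: because $U_{ij}^2$ obeys quadratic damping while the forcing is effectively linear in $|U_{ij}|$, one must pass to $|U_{ij}|$ through the sign form of Lemma \ref{L1} (or complete a square) so that the threshold can be read off cleanly as $P_\ap^\ast(\ve)$ without spurious $\ap$-dependent constants.
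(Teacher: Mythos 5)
Your proposal follows essentially the same route as the paper: form the pairwise gap equations \eqref{deq}, multiply by $U_{ij}$ and invoke Lemma \ref{L1}, use the absorbing estimates of Theorem \ref{T2} together with the weight bound \eqref{wij} to dominate every inhomogeneous term by the same bracket $G = a^*M^{1/2}(\ap)+k\eta^*M^{3/2}(\ap)+2\gb\bigl[1+\tfrac1c\sqrt{m\G(\ap)}\,\gl\gb^2\bigr]+J^*$, and close with the fractional Gronwall inequality; the monotonicity argument via $\G'(\ap)<0$ is also the paper's. The one substantive divergence is your reduction to the scalar inequality $\dca|U_{ij}|\le -mP|U_{ij}|+G$: Lemma \ref{L1} only controls $\dca f^2$, and since Caputo derivatives admit no chain rule you cannot pass to $\dca|U_{ij}|$ without an additional convexity-type lemma ($\dca\phi(f)\le\phi'(f)\dca f$ for $\phi(x)=|x|$) that the paper never states. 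The paper instead takes exactly your fallback: it keeps the quadratic form, uses Cauchy--Young to absorb the forcing $2G|U|$ into the damping, obtaining $\dca U^2+\mu U^2\le G^2/\mu$ with $\mu=a-\tfrac12 k\eta^2+mP$, applies Lemma \ref{L2} to $U^2$, and extracts $\limsup_{t\to\infty}|U_{ij}|$ by a square root at the very end, finally bounding $1/\mu<1/(mP)$. One further caution: as stated, Lemma \ref{L2} delivers $\limsup x\le \tfrac{p}{q}\G(\ap)$, so your application to $|U_{ij}|$ would actually yield $\G(\ap)G/(mP)$ rather than $G/(mP)$; the paper's own final step silently drops the analogous $\sqrt{\G(\ap)}$ factor, so if you want the threshold \eqref{Ph} exactly as written you must either track that factor into $P_\ap^*(\ve)$ or justify discarding it.
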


\begin{proof}
In the sequel of this proof we can denote $U_{ij}(t)$ by $U(t)$ for notational simplicity. For any given $1 \leq i \neq j \leq m$, multiply the $U_{ij}$-equation \eqref{deq} by $U_{ij} (t)$. With \eqref{wij} and \eqref{sb} we have  

\begin{equation} \bl{Gb}
	\begin{split}
	&\frac{1}{2}\,\dca U^2 (t) \leq U(t) \dca U(t) = - a_i U^2 (t) - (a_i - a_j) u_j (t) U(t)   \\
	&+ \sum_{\ell =1}^m \,\left(w_{i\ell}(t) - w_{j\ell}(t) \right) f_\ell (u_\ell (t))U(t)   \\[4pt]
	&+ k (\psi_i (\rho(t)) - \psi_j (\rho(t)))u_i(t) U(t) + k \psi_j (\rho(t)) U^2(t) + (J_i - J_j) U(t) - mPU^2 (t)  \\[3pt]
	\leq & - a U^2(t) + a^* |u_j(t)| |U(t)| + 2 \gb \left[W_0 \sqrt{G_\ap(t)} + \frac{1}{c}\sqrt{m \G(\ap)} \gl \gb^2 \right] |U(t)|   \\[7pt]
	&\, + k |\eta_i - \eta_j| \rho(t) |u_i(t)| |U(t)| + k \left(\eta_j \rho(t) - \rho^2 (t) \right) U^2(t) + J^* |U(t)| - m PU^2 (t)  \\[2pt]
	\leq & - a U^2(t) + a^* |u_j(t)| |U(t)| +  2 \gb \left[W_0 \sqrt{G_\ap(t)} + \frac{1}{c}\sqrt{m \G(\ap)} \gl \gb^2 \right] |U(t)|   \\[2pt]
	&\, + k \eta^* |\rho(t)| |u_i(t)| |U(t)| + \frac{1}{2}\, k \,(\eta^2 - \rho^2 (t)) U^2(t) + J^* |U(t)| - m PU^2 (t)  \\
	\leq &\, - \left[ a - \frac{1}{2} k \,\eta^2 + mP \right] U^2(t)   \\
	&\, + \left[ a^* |u_j(t)| + k \eta^* |\rho(t)| |u_i(t)| +  2\gb \left[W_0 \sqrt{G_\ap(t)} + \frac{1}{c}\sqrt{m \G(\ap)} \gl \gb^2 \right] + J^* \right] |U(t)|.
	\end{split}
\end{equation}
According to the ultimate bounding estimate \eqref{sb} and \eqref{Gt}, for any given initial state and weights $g^0 = (u_i^0, w_{ij}^0, \rho^0)$, there is a finite time $T(g^0) > 0$ such that 
$$
	 \sum_{i = 1}^m u_i^2(t) + \rho^2(t) < M(\ap) \quad \text{and} \quad  W_0 \sqrt{G_\ap(t)} < 1, \quad \text{for} \;\; t > T(g^0).
$$
Therefore the above differential inequality \eqref{Gb} implies that, for $t > T(g^0)$,
\begin{equation} \bl{Db}
	\begin{split}
	& \dca U^2 (t) + 2 \left(a - \frac{1}{2}\, k \eta^2 + mP \right)U^2(t)    \\
	\leq &\, 2 \left(a^* M^{1/2}(\ap) + k \eta^* M^{3/2}(\ap) + 2\gb \left[1 + \frac{1}{c}\sqrt{m \G(\ap)} \gl \gb^2 \right]  + J^* \right) |U(t)|.
	\end{split}
\end{equation}
Use Cauchy inequality to split the product on the right-hand side of the inequality \eqref{Db} into two terms. Then we obtain 

\beq \bl{Ub}
	\begin{split}
	 &\dca U^2 (t) +  \left(a - \frac{1}{2}\, k \eta^2 + mP \right)U^2(t)    \\
	\leq &\, \frac{\left(a^* M^{1/2}(\ap) + k \eta^* M^{3/2}(\ap) + 2\gb \left[1 + \frac{1}{c}\sqrt{m \G(\ap)} \gl \gb^2 \right]  + J^* \right)^2}{a - \frac{1}{2}\, k \eta^2 + mP}, \quad t > T(g^0).
	\end{split}
\eeq
Denote by 
\beq \bl{aph}
	\mu = a - \frac{1}{2}\, k \eta^2 + mP.
\eeq
From \eqref{Ub} it follows that 
\beq  \bl{UG}
	\dca U^2 (t) + \mu U^2(t) \leq \frac{1}{\mu} \left(a^* M^{1/2}(\ap) + k \eta^* M^{3/2}(\ap) + 2\gb \left[1 + \frac{1}{c}\sqrt{m \G(\ap)} \gl \gb^2 \right]  + J^* \right)^2
\eeq
for $ t > T(g^0)$. Apply the fractional Gronwall inequality \eqref{pq} and \eqref{fG} in Lemma \ref{L2} to the fractional differential inequality \eqref{UG} on the time interval $[T(g^0), \infty)$. Then \eqref{UG} shows that for any two different neuron nodes $\mathcal{N}_i$ and $\mathcal{N}_j$ in the neural network, the gap function $U_{ij}(t) (= U(t))$ of their state trajectories satisfies
\beq \bl{Vb} 
	\begin{split}
	&U_{ij}^2(t) \leq U_{ij}^2 (T_{g^0}) \left( \frac{\G(1 + \ap)}{\G(1 + \ap) + \mu (t - T(g^0))^\ap} \right)  \\
	+ &\, \frac{\G(\ap)}{\mu^2} \left[a^* M^{1/2}(\ap) + k \eta^* M^{3/2}(\ap) + 2\gb \left[1 + \frac{1}{c}\sqrt{m \G(\ap)} \gl \gb^2 \right] + J^*\right]^2,  \; t  > T(g^0).
	\end{split}
\eeq  

Finally, for any prescribed small gap $\ve > 0$, if the threshold condition $P > P_\ap^*(\ve)$ in \eqref{Ph} is satisfied, then by \eqref{Vb} and because $a - \frac{1}{2}k \eta^2 > 0$ due to \eqref{Asp} we conclude that for any $1 \leq i \neq j \leq m$,
\beq \bl{Vsp}
	\begin{split}
	&\limsup_{t \to \infty} |u_i(t) - u_j(t)| = \limsup_{t \to \infty} |U_{ij}(t)|   \\[3pt]
	 \leq &\,\frac{\sqrt{\G(\ap)}}{\mu} \left[a^* M^{1/2}(\ap) + k \eta^* M^{3/2}(\ap) + 2\gb \left[1 + \frac{1}{c}\sqrt{m \G(\ap)} \gl \gb^2 \right] + J^*\right]   \\
	 = &\, \frac{1}{a - \frac{1}{2}\, k \eta^2 + m P} \left[a^* M^{1/2}(\ap) + k \eta^* M^{3/2}(\ap) + 2\gb \left[1 + \frac{1}{c}\sqrt{m \G(\ap)} \gl \gb^2 \right] + J^*\right] \\
	 < &\, \frac{1}{m P_\ap^*(\ve)} \left[a^* M^{1/2}(\ap) + k \eta^* M^{3/2}(\ap) + 2\gb \left[1 + \frac{1}{c}\sqrt{m \G(\ap)} \gl \gb^2 \right] + J^*\right] = \ve.
	\end{split}
\eeq
Thus \eqref{asd} is proved. Since $\G^\prime (\ap) < 0$ for $\ap \in (0, 1)$, $\G(\ap)$ is strictly decreasing in this interval. Therefore, from \eqref{Ph} and \eqref{M} it is seen that the synchronization threshold $P_\ap^* (\ve)$ is strictly decreasing with respect to the fractional order $\ap \in (0, 1)$. The proof is completed.
\end{proof}

Remark. For fractional order $0 < \ap < 1$, the Mittag-Leffler function $E_\ap(- \mu t^\ap)$ as $t \to \infty$ does not have an exponential decay rate. According to \eqref{pq}-\eqref{fG} and \eqref{Vb}, we can see that the robust synchronization in Theorem \ref{TM} admits a slower fractional power convergence rate $O([\mu\, t^\ap]^{-1})$. This is different from the approximate exponential synchronization rate of the ODE modeled memristive Hopfield neural networks in the earlier paper \cite{Y}. 

\begin{corollary} \bl{Cy}
	Any recursive fractional memristive Hopfield neural network presented in the model \eqref{Meq} with the Assumption \eqref{Asp} except that all learning weights $w_{ij}, 1 \leq i, j \leq m,$ are fixed constants is robust synchronizable. For any prescribed small gap $\ve > 0$, there is a constant threshold 
\beq \bl{Pw}
	\Hat{P}_\ap (\ve) = \frac{1}{m\, \ve} \left[a^* L^{1/2}(\ap) + k \eta^* L^{3/2}(\ap) + 2\gb W + J^*\right]  > 0,
\eeq 
where $W= \max \{|w_{ij}|:  \leq i, j \leq m\}$ and
\beq \bl{L}
	L(\ap) = 1 + \frac{m \G(\ap) \left(\frac{m \ga^2}{b} + b \right) (\gb W + J)^2}{\delta \min \left\{\left(\frac{m \ga^2}{b} + b \right), \left(a - \frac{1}{2} k \eta^2\right)\right\} \left(a - \frac{1}{2} k \eta^2\right)},
\eeq
such that if the interneuron coupling strength coefficient $P > \Hat{P}_\ap (\ve)$, then the neural network is robust synchronized. 
\end{corollary}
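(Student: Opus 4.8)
The plan is to recognize this corollary as a structural simplification of Theorem \ref{TM}: fixing the synaptic weights $w_{ij}$ removes the second block of equations in \eqref{Meq}, so the evolutionary system \eqref{pb} collapses to the $(m+1)$-dimensional state $(u_1,\dots,u_m,\rho)$ and the vector field $F(g)$ in \eqref{opf} loses its $w_{ij}$-component. Consequently the entire weight-dynamics apparatus -- the estimate \eqref{wd} via Lemma \ref{L1}, the resulting bound \eqref{wij}, and the aggregate \eqref{wb} -- is no longer needed. I would therefore re-run the three-stage argument (global existence, absorbing ball, and gap-function synchronization) essentially verbatim, with the single change that wherever the time-dependent Hebbian weight bound $W_0\sqrt{G_\ap(t)} + \frac{1}{c}\sqrt{m \G(\ap)} \gl \gb^2$ appears it is replaced by the constant $W = \max\{|w_{ij}|: 1 \leq i,j \leq m\}$.

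For the dissipativity stage I would reproduce the scaled estimate \eqref{ut}. The only affected term is $\sum_j w_{ij} f_j(u_j) u_i$; using $|w_{ij}| \leq W$ and $|f_j| \leq \gb$ directly, it is bounded by $\gb W |u_i|$, so the bracket $J + \gb[W_0\sqrt{G_\ap(t)} + \frac{1}{c}\sqrt{m \G(\ap)} \gl \gb^2]$ in \eqref{ut} becomes the constant $J + \gb W$. The $\rho$-estimate \eqref{rt}, the combination \eqref{cur}, the same scaling constant $C_1$ from \eqref{C1}, and the decay rate $\de$ from \eqref{dr} are all unchanged. Applying the fractional Gronwall inequality of Lemma \ref{L2} to the resulting analog of \eqref{urb} then delivers the absorbing ball of squared radius $L(\ap)$ displayed in \eqref{L}, exactly as $M(\ap)$ was produced in Theorem \ref{T2}.

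For the synchronization stage I would form the gap functions $U_{ij} = u_i - u_j$, which satisfy \eqref{deq} with the identical interneuron coupling term $-mP U_{ij}$. In the key inequality \eqref{Gb} the weight-difference term $\sum_\ell (w_{i\ell} - w_{j\ell}) f_\ell(u_\ell) U_{ij}$ is now controlled by $2 \gb W |U_{ij}|$, while every occurrence of $M(\ap)$ is replaced by $L(\ap)$. Carrying the same Cauchy splitting through \eqref{Db}--\eqref{UG} and then applying the fractional Gronwall inequality \eqref{fG} on $[T(g^0), \infty)$ produces, in place of \eqref{Vb}, a bound whose limit superior equals $\frac{\sqrt{\G(\ap)}}{\mu}[a^* L^{1/2}(\ap) + k \eta^* L^{3/2}(\ap) + 2\gb W + J^*]$ with $\mu = a - \frac{1}{2} k\eta^2 + mP$ as in \eqref{aph}. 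Forcing this quantity below $\ve$ yields precisely the threshold $\Hat{P}_\ap(\ve)$ of \eqref{Pw}.

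I expect no genuine obstacle here; the argument is in fact easier than that of Theorem \ref{TM}, since the delicate coupled weight estimate \eqref{wd}--\eqref{wb} vanishes entirely. The only point demanding care is the bookkeeping: checking that the substitution of $W$ for the Hebbian bound propagates consistently so that the final constants coincide with \eqref{Pw} and \eqref{L}. The strict monotonicity in $\ap$, although not asserted in the corollary, would likewise follow from $\G'(\ap) < 0$ on $(0,1)$ as at the close of the proof of Theorem \ref{TM}.
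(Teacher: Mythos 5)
Your proposal is correct and follows essentially the same route as the paper's own proof, which likewise just substitutes the constant bound $W$ for the time-dependent Hebbian weight bound $W_0\sqrt{G_\ap(t)} + \frac{1}{c}\sqrt{m\G(\ap)}\gl\gb^2$ throughout \eqref{ut}, \eqref{cur}, \eqref{C2}, \eqref{M} and \eqref{Ph} and then invokes the argument of Theorem \ref{TM}. Your version is merely more explicit about the bookkeeping, which is a virtue rather than a divergence.
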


\begin{proof}
	Since all the weights $\{w_{ij}\}$ in the neural network are fixed constants, we should let $W= \max \{|w_{ij}|:  \leq i, j \leq m\}$ replace $W_0 \sqrt{G_\ap(t)} + \frac{1}{c}\sqrt{m \G(\ap)} \gl \gb^2$ in \eqref{ut}, \eqref{cur}, \eqref{C2} and later replace $1 + \frac{1}{c}\sqrt{m \G(\ap)} \gl \gb^2$ in \eqref{M}, \eqref{Ph}. 
	
	Then we reach the same conclusion that any such recursive fractional memristive Hopfield neural network will be robust synchronized if the threshold condition $P > \Hat{P}_\ap (\ve)$ shown in \eqref{Pw}-\eqref{L} is satisfied by the network coupling coefficient. 
\end{proof}

In particular, by the strict decreasing property of $\G(\ap)$ on the interval $(0, 1)$ and the known value $\G(0.5) = \sqrt{\pi}$, we have
$$
	L(\ap) = 1 + \frac{m \sqrt{\pi} \left(\frac{m \ga^2}{b} + b \right) (\gb W + J)^2}{\delta \min \left\{\left(\frac{m \ga^2}{b} + b \right), \left(a - \frac{1}{2} k \eta^2\right)\right\} \left(a - \frac{1}{2} k \eta^2\right)}, \quad \text{if} \;\; 0.5 \leq \ap < 1.
$$

Although Hopfield neural networks are cross-coupled networks but not multilayer or deep neural networks, the procedures and techniques in this work can be applied to study of interlayer and intralayer synchronization of multilayer learning neural networks as well as the cluster synchronization of many multiplex networks. 

\vspace{4pt}
\textbf{Conclusions}

\vspace{4pt}
We summarize the contributions in this paper and outlook for further extensions.

First we proposed and investigated a new model.of Caputo type time-fractional memristive Hopfield-Hebbian neural networks. This model incorporates multiple features of long-term memory, efficient stinulus responses in brain-machine learning, and Hebbian learning rules in unsupervised meaning.  

The fractional analysis of three Lemmas are presented and proved as a foundation for conducting the scaled \emph{a priori} estimates of solutions, especially the fractional Gronwall inequality not just in the form of Mittag-Lefflrter functions but also in the explicit form in terms of Gamma functions of a fractional order, which enables us to effectively establish the global solution existence and the ultimate absorbing bound. It paves the way to explore the asymptotic dynamics of the time-fractional neural networks.

The main contribution of this paper is Theorem \ref{TM}. It rigorously proved a sufficient threshold condition to be satisfied by a single interneuron coupling strength coefficient for achieving robust synchronization of the Caputo time-fractional neural networks. The usefulness and merit of this result lie in the explicit expression of the threshold and the convergence rate, which are computable in terms of the original parameters. Moreover, threshold is shown to be decreasing when the fractional order $\ap \in (0, 1)$. 

The methodology and analysis in this work potentially can be generalized to study other types of time-fractional artificial neural networks, especially for multilayer forward neural networks with semi-supervised data sets and for multiplex networks. 

Mathematical research on predicable or complex asymptotic dynamics including  synchronization of artificial neural networks is meaningful in deep learning performances but remains challenging and widely open, especially for training and learning with backpropagation algorithms.

\end{document}